\documentclass{amsart}

\usepackage{amsthm,amsmath,amsthm,amssymb,mathrsfs,graphicx,mathtools,relsize,hyperref,cleveref,enumerate,nameref}
\usepackage[utf8]{inputenc}
\usepackage[all]{xy}

\theoremstyle{theorem}
 \newtheorem{thm}{Theorem}[section]
 \newtheorem{prop}[thm]{Proposition}
 \newtheorem{lem}[thm]{Lemma}
 \newtheorem{cor}[thm]{Corollary}

 \newtheorem*{thmA}{Theorem A}
\theoremstyle{definition}

\theoremstyle{remark}
 
 \numberwithin{equation}{section}
 
\renewcommand{\le}{\leqslant}

\renewcommand{\setminus}{\smallsetminus}
\setlength{\textwidth}{28cc} \setlength{\textheight}{42cc}



\def\supp{\text{\rm supp}}

\def\acts{\curvearrowright}
\newcommand{\norm}[1]{\left\lVert#1\right\rVert}


\title[The affine group of a local field is Hermitian]{The affine group of a local field is Hermitian}
\date{\today}

\subjclass[2020]{22D15, 43A20, 43A45, 46H05}
\keywords{Hermitian Banach $*$-algebra, affine group, local field, number field.}
\thanks{The author acknowledges support from the FWO and F.R.S.-FNRS under the Excellence of Science (EOS) program (project ID 40007542).}

\author[Max Carter]{Max Carter} 
\address{Institut de recherche en mathématique et physique \\
Université Catholique de Louvain \\ 
Chemin du Cyclotron 2 \\
boîte L7.01.02 \\
1348 Louvain-la-Neuve \\
Belgique.}
\email{max.carter@uclouvain.be}


\begin{document}

\begin{abstract}
The question of whether the group $\mathbb{Q}_p \rtimes \mathbb{Q}_p^*$ is Hermitian has been stated as an open question in multiple sources in the literature, even as recently as a paper by R.\! Palma published in 2015. In this note we confirm that this group is Hermitian by proving the following more general theorem: given any local field $\mathbb{K}$, the affine group $\mathbb{K} \rtimes \mathbb{K}^*$ is a Hermitian group. The proof is a consequence of results about Hermitian Banach $*$-algebras from the 1970's. In the case that $\mathbb{K}$ is a non-archimedean local field, this result produces examples of totally disconnected locally compact Hermitian groups with exponential growth, and these are the first examples of groups satisfying these properties. This answers a second question of Palma about the existence of such groups. 
\end{abstract}

\maketitle


\section{Introduction}

A locally compact group $G$, or equivalently the Banach $*$-algebra $L^1(G)$, is called \textit{Hermitian} if all self-adjoint $f \in L^1(G)$ have real spectrum. It is a classical question in harmonic analysis and Banach algebra theory to determine which locally compact groups are Hermitian \cite{Lep84}. This question has been studied extensively since the mid-1900's, particularly in the cases of connected Lie groups and discrete groups. See \cite[Section 12.6.22]{Pal01} and \cite{SW20} for more details on the current state of knowledge. 

It is well known that the real $ax+b$ group, $\mathbb{R} \rtimes \mathbb{R}_{>0}$, is a Hermitian group. This fact is typically attributed to the following German paper of Leptin \cite[Satz 6]{Lep76c}, but also follows from a later classification of solvable connected simply-connected Hermitian Lie groups of dimension $\le 6$ \cite{LP79,Pog79}. On the other hand, it is stated as an open question in, for example, \cite[Page 1490]{Pal01} and \cite[Section 3.6]{Pal15}, to determine whether the group $\mathbb{Q}_p \rtimes \mathbb{Q}_p^*$ is Hermitian. In this note, we answer this question by proving the following more general theorem.

\begin{thmA}
Let $\mathbb{K}$ be a local field. The group $\mathbb{K} \rtimes \mathbb{K}^*$ is Hermitian.
\end{thmA}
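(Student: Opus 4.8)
Write $G=\mathbb{K}\rtimes\mathbb{K}^*$, the semidirect product of the locally compact abelian group $(\mathbb{K},+)$ by the locally compact abelian group $\mathbb{K}^*$ acting by multiplication; in particular $G$ is metabelian, hence amenable. Since a Banach $*$-algebra is Hermitian if and only if it is symmetric (Shirali--Ford, itself a result from the 1970's), it suffices to prove that $L^1(G)$ is symmetric. The plan is to treat all local fields uniformly by a Mackey-machine argument applied to the abelian normal subgroup $\mathbb{K}$; for $\mathbb{K}=\mathbb{R}$ or $\mathbb{C}$ this should recover classical facts such as the symmetry of the $ax+b$ group, so the genuinely new content is the non-archimedean case.

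The main tool I would invoke is a symmetry criterion for semidirect products $N\rtimes H$ with $N$ abelian, coming from Leptin's theory of generalized $L^1$-algebras: if every $H$-orbit in $\widehat N$ is locally closed and, for each $\chi\in\widehat N$, the (twisted) $L^1$-algebra of the stabilizer $H_\chi=\{h\in H:h\cdot\chi=\chi\}$ is symmetric, then $L^1(N\rtimes H)$ is symmetric. The mechanism is that $L^1(N\rtimes H)$ carries a finite composition series indexed by the orbits, the subquotient attached to an orbit $\mathcal{O}\cong H/H_\chi$ being a generalized $L^1$-section algebra over $\mathcal{O}$ with fibre the twisted group algebra of $H_\chi$; local closedness ensures each subquotient has a bounded approximate identity and a locally Hausdorff spectrum, and symmetry is then propagated up the series by the 1970's permanence theorem for suitable extensions of symmetric Banach $*$-algebras.

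For $N=\mathbb{K}$, $H=\mathbb{K}^*$ the hypotheses are easy to check. Identifying $\widehat{\mathbb{K}}$ with $\mathbb{K}$ via the self-duality of local fields (after fixing a nontrivial character), the action of $\mathbb{K}^*$ dual to multiplication is again multiplication, so $\mathbb{K}^*$ has exactly two orbits on $\widehat{\mathbb{K}}\cong\mathbb{K}$: the closed fixed point $\{0\}$ and the open orbit $\mathbb{K}\setminus\{0\}=\mathbb{K}^*$, on which $\mathbb{K}^*$ acts simply transitively. Accordingly the composition series is a single extension
\[
0\to J\to L^1(G)\to L^1(\mathbb{K}^*)\to 0
\]
in which the quotient $L^1(\mathbb{K}^*)=L^1(G/\mathbb{K})$ is an abelian group algebra, hence symmetric, and the ideal $J$ — the generalized $L^1$-algebra attached to the open orbit, with trivial stabilizer and trivial Mackey cocycle and therefore one-dimensional fibres (its $C^*$-completion being $\mathcal{K}(L^2(\mathbb{K}^*))$) — is symmetric by the criterion. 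Since the open orbit is locally closed, $J$ has a bounded approximate identity and the extension is of the admissible type, so $L^1(G)$ is symmetric and $G$ is Hermitian. Over a non-archimedean $\mathbb{K}$ the group $\mathbb{K}^*$ is non-compact and $\mathbb{K}\setminus\{0\}$ is a single orbit, which is exactly why $G$ then has exponential growth while remaining Hermitian; replacing $\mathbb{K}^*$ by $\mathbb{R}_{>0}$ one would instead see three orbits and the classical $ax+b$ picture.

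The principal obstacle is the lifting step: symmetry of Banach $*$-algebras is not inherited from arbitrary ideals and quotients, so one must pin down the exact permanence theorem and verify its hypotheses — bounded approximate identities, local closedness of each orbit (equivalently, that $\widehat{\mathbb{K}}/\mathbb{K}^*$ is $T_0$ with all points locally closed), and symmetry of the twisted stabilizer algebras. No growth hypothesis can serve as a shortcut, since $\mathbb{K}^*$ acts on $\mathbb{K}$ by arbitrarily large dilations and $G$ has exponential growth, so the argument must genuinely exploit the orbit structure of the dual action rather than any Hulanicki-type criterion. The remaining technical burden is to confirm that Leptin's generalized $L^1$-algebra formalism and the relevant extension theorem go through verbatim over an arbitrary local field, i.e. that they do not tacitly rely on connectedness or on the archimedean absolute value.
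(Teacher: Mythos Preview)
Your strategy is essentially the mechanism that sits \emph{inside} Leptin's Satz~5, which the paper simply quotes as a black box. Both arguments rest on the same two observations: the self-duality $\widehat{\mathbb{K}}\cong\mathbb{K}$ turns the dual action into multiplication, and that action has exactly the two orbits $\{0\}$ and $\mathbb{K}\setminus\{0\}$. Where you then build the two-step composition series $0\to J\to L^1(G)\to L^1(\mathbb{K}^*)\to 0$ by hand and worry about the permanence of symmetry under extensions, the paper instead verifies the hypotheses of Leptin's theorem (finitely many orbits, closed invariant sets are sets of synthesis for $A(\widehat{\mathbb{K}})$) and lets that theorem do the lifting.

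The one genuine ingredient you do not name is the spectral-synthesis condition. In Leptin's framework the ideal attached to the open orbit is $j(\{0\})$, and identifying it with the ``expected'' ideal $k(\{0\})$ --- which is what makes the quotient come out as $L^1(\mathbb{K}^*)$ and makes the extension admissible --- is exactly the statement that $\{0\}$ is a set of synthesis in $A(\mathbb{K})$. This is the technical point the paper checks explicitly (singletons are always Wiener in a Fourier algebra), and it is what hides behind your phrase ``verify its hypotheses --- bounded approximate identities, local closedness\ldots''. Local closedness alone is not the right hypothesis here; synthesis is. Once that is said, your sketch and the paper's proof are the same argument, the paper's version being shorter because it cites the packaged theorem rather than re-deriving the extension step.
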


When $\mathbb{K}$ is a non-achimedean local field, the group $\mathbb{K} \rtimes \mathbb{K}^*$ is a non-discrete totally disconnected locally compact group. Furthermore, this group is non-unimodular and hence has exponential growth. This theorem thus produces examples of (non-discrete) totally disconnected locally compact Hermitian groups with exponential growth, answering a question of Palma about the existence of such groups, see \cite[Question 2, page 266]{Pal13} and \cite[Section 3.6]{Pal15}. In contrast, a finitely generated solvable discrete group is Hermitian if and only if it has polynomial growth \cite[Page 277]{Lep76}. It should also be noted that, as far as the author is aware, it is still an open question as to whether there exists a discrete Hermitian group with exponential growth, see \cite{Pal13,Pal15} again. 

The proof of Theorem A is a consequence of Satz 5 in Leptin's original article \cite{Lep76c} (we will refer to this theorem as \textit{Leptin's theorem} from now on). The point of this note is to translate the critical parts of Leptin's paper \cite{Lep76c} into English, and provide the few extra details required to verify that $\mathbb{K} \rtimes \mathbb{K}^*$ is a Hermitian group whenever $\mathbb{K}$ is a local field. 

If we now instead take $\mathbb{K}$ to be a number field equipped with the discrete topology, then it is also well known that the discrete group $\mathbb{K} \rtimes \mathbb{K}^*$ is \textit{not} a Hermitian group since it contains free subsemigroups on two or more generators \cite{Jen70}. As an example to conclude the article, we show that in the case of the group $\mathbb{K} \rtimes \mathbb{K}^*$ when $\mathbb{K}$ is a number field, only one of the assumptions of Leptin's theorem is not satisfied. In particular, the assumptions in Leptin's theorem are strictly necessary to get a Hermitian Banach $*$-algebra, which has implications on other questions the author is currently working on.


\section{Preliminaries on generalised $L^1$-algebras and Fourier algebras}\label{sec:2}

In this section we define some notation and terminology concerning generalised $L^1$-algebras and Fourier algebras. We refer the reader to \cite{Lep65,Lep67,Lep76} for more details on generalised $L^1$-algebras. Throughout this section $G$ is a locally compact group and all integration is performed against a fixed left Haar measure on $G$.

Let $A$ be a Banach $*$-algebra and $G$ a locally compact group acting strongly continuously on $A$ by isometric $*$-automorphisms. Given $a \in A$ and $x \in G$, let $a^x$ denoted the image of $a$ under $x$ with respect to this action. The \textit{generalised $L^1$-algebra} associated to $G$ and $A$ is the space $L^1(G,A)$ of measurable $A$-valued functions on $G$ which are integrable against the Haar measure. It forms a Banach $*$-algebra with the product
\begin{displaymath} f \star g(x) := \int_G f(xy)^{y^{-1}}g(y^{-1}) \: dy \end{displaymath}
and involution
\begin{displaymath} f^*(x) := \Delta(x)^{-1} (f(x^{-1})^{x})^* \end{displaymath} 
where $\Delta$ is the modular function on $G$. The norm on $L^1(G,A)$ is the obvious one:
\begin{displaymath} \norm{f}_{L^1(G,A)} := \int_G \norm{f(x)}_{A} \: dx. \end{displaymath}

Now let's further assume that $G$ is abelian. Then, the \textit{Fourier algebra} of $G$, denoted $A(G)$, is the image of $L^1(\widehat{G})$ under the Fourier transform on $\widehat{G}$. The algebra $A(G)$ is a $*$-subalgebra of $C_0(G)$, and it is a Banach $*$-algebra when equipped with a certain norm which we denote by $\norm{\:\cdot\:}_{A(G)}$. We will not go further into the details of Fourier algebras here, but the reader can consult the book \cite{KL18} for more details if desired. 

We now note the following result which will be critical in this article.

\begin{prop}\label{prop:genl1}
Let $G := N \rtimes H$ be a semi-direct product of locally compact abelian groups with $N$ normal in $G$. Then, we have isometric $*$-isomorphisms:
\begin{displaymath}  L^1(G) \cong L^1(H, L^1(N)) \cong L^1(H,A(\widehat{N})). \end{displaymath}
\end{prop}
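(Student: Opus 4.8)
The plan is to produce both isomorphisms by explicit formulas: the first disintegrates $L^1(G)$ along the projection $G \to G/N \cong H$, and the second applies the Fourier transform on the abelian group $N$.

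For the first isomorphism, write every $g \in G = N \rtimes H$ uniquely as $g = hn$ with $h \in H$, $n \in N$. Since $N$ is normal we have $\Delta_G|_N = \Delta_N$, so Weil's integration formula applies and, for a compatible choice of left Haar measures, gives $\int_G f(g)\,dg = \int_H \int_N f(hn)\,dn\,dh$; moreover the modular function of $G$ restricts to $1$ on $N$ and to the module $\delta \colon H \to \mathbb{R}_{>0}$ of the conjugation action $\alpha_h \colon n \mapsto hnh^{-1}$ of $H$ on $N$. First I would equip $A := L^1(N)$ with the $H$-action $a^h(n) := \delta(h)^{-1} a(\alpha_h(n))$; the factor $\delta(h)^{-1}$ is precisely what makes $a \mapsto a^h$ an isometric $*$-automorphism of $L^1(N)$, and strong continuity follows in the usual way from continuity of $h \mapsto \alpha_h$ and $\delta$ together with density of $C_c(N)$ in $L^1(N)$. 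Then I would define $\Psi \colon L^1(G) \to L^1(H,A)$ by $(\Psi f)(h)(n) := f(hn)$; the integration formula above, together with the unique factorisation $g = hn$, shows that $\Psi$ is a well-defined surjective linear isometry.

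The substance of the argument is the verification that $\Psi$ is a $*$-homomorphism (the fact that $L^1(N \rtimes H) \cong L^1(H,L^1(N))$ as Banach $*$-algebras is standard in the theory of generalised $L^1$-algebras, but I would include the check for completeness). For it I would substitute the multiplication rule $h_1 n_1 \cdot h_2 n_2 = (h_1 h_2)\bigl(\alpha_{h_2^{-1}}(n_1)\,n_2\bigr)$ into the definition of convolution on $L^1(G)$, use Weil's formula to split the integral over $G$ into one over $H$ and one over $N$, and perform the relevant changes of variable inside $N$; collecting the ensuing powers of $\delta$ reproduces exactly the twisted convolution $\star$ on $L^1(H,A)$ of Section~\ref{sec:2}, with the $H$-action on $A$ just defined. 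The parallel, shorter computation for the involution uses that $\Delta_H \equiv 1$ (since $H$ is abelian), the whole modular contribution of $G$ to the formula for $f^*$ being absorbed into the factor $\delta(h)^{-1}$ built into the $H$-action on $A$. I expect this bookkeeping — keeping the semidirect-product multiplication, the twisting automorphisms $\alpha_h$, and the module $\delta$ straight through several substitutions — to be the main obstacle; the remaining steps are formal.

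For the second isomorphism I would use that $N$ is abelian. By the definition of the Fourier algebra together with Pontryagin duality ($\widehat{\widehat N} \cong N$), $A(\widehat N)$ is the image of $L^1(N)$ under the Fourier transform $\mathcal F_N \colon L^1(N) \to C_0(\widehat N)$, equipped with the norm transported along $\mathcal F_N$; since $\mathcal F_N$ sends convolution to pointwise multiplication and the $L^1$-involution to complex conjugation, $\mathcal F_N \colon L^1(N) \to A(\widehat N)$ is an isometric $*$-isomorphism. It then remains to check that this isomorphism is $H$-equivariant: the conjugation action $\alpha$ of $H$ on $N$ induces by duality an action of $H$ on $\widehat N$ and hence, by precomposition, an action on functions on $\widehat N$, and a one-line computation — in which the substitution $n \mapsto \alpha_h(n)$ cancels the factor $\delta(h)^{-1}$ — shows that $\mathcal F_N(a^h) = \mathcal F_N(a)^h$ for this action, which is accordingly again strongly continuous and by isometric $*$-automorphisms. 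Finally, since the product $\star$, the involution and the norm on a generalised $L^1$-algebra $L^1(H,B)$ are built only from the algebra structure of the coefficient algebra $B$, the $H$-action on $B$, and integration over $H$, every $H$-equivariant isometric $*$-isomorphism $B_1 \to B_2$ induces, by post-composition, an isometric $*$-isomorphism $L^1(H,B_1) \cong L^1(H,B_2)$; applied to $\mathcal F_N \colon L^1(N) \to A(\widehat N)$ this gives $L^1(H,L^1(N)) \cong L^1(H,A(\widehat N))$ via $f \mapsto \mathcal F_N \circ f$.
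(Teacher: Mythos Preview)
Your proposal is correct and rests on the same two facts the paper invokes: the standard identification $L^1(N \rtimes H) \cong L^1(H, L^1(N))$ from the theory of generalised $L^1$-algebras, and the Fourier-transform isomorphism $L^1(N) \cong A(\widehat N)$. The paper's proof, however, consists solely of citations (to \cite[Satz 10]{Lep65} and \cite[Remark 2.4.5]{KL18}) and writes out no formulas; your version unpacks those references by giving the explicit maps and doing the bookkeeping with the module $\delta$. One detail you handle more carefully than the paper: for the second isomorphism it is not enough that $L^1(N) \cong A(\widehat N)$ as Banach $*$-algebras --- the isomorphism must also intertwine the $H$-actions, and your equivariance check for $\mathcal F_N$ supplies exactly this, whereas the paper leaves it implicit.
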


\begin{proof}
The first isomorphism follows from standard results on generalised $L^1$-algebras, see \cite[Satz 10]{Lep65}. The second isomorphism follows from the fact that $L^1(N)$ is isometrically $*$-isomorphic to $A(\widehat{N})$ for any locally compact abelian group $N$, see \cite[Remark 2.4.5]{KL18}. 
\end{proof}


\section{Translations from Leptin's article}\label{sec:3}

In this section we translate the necessary parts of Leptin's article \cite{Lep76c} to English and set some further notation.

Let $G$ be a locally compact group and $X$ a locally compact $G$-space. Let $A \subseteq C_0(X)$ be a $*$-subalgebra satisfying the following four properties: 
\begin{enumerate}[(i)]
   \item $A$ is a Banach $*$-algebra with respect to some norm denoted $\norm{f}_A$ (possibly different to the uniform norm on $C_0(X)$);\
   \item $A$ is left translation invariant i.e.\ for $f \in A$ and $g \in G$, $f^g(x) := f(gx)$ is in $A$ and $\norm{f^g}_A = \norm{f}_A$;\
   \item For every $f \in A$, the map $g \mapsto f^g$ is continuous from $G$ to $A$;
   \item $A$ is a regular Banach algebra and the functions of compact support in $A$ are dense in $A$.
\end{enumerate}

Now, for a closed subset $T \subseteq X$, define
\begin{displaymath} k(T):= \{ f \in A : f(x)=0 \: \forall x \in T \} \end{displaymath}
and
\begin{displaymath}  j(T) := \overline{\{ f \in A \cap C_c(X) : \supp(f) \cap T = \emptyset\}}. \end{displaymath}
The sets $k(T)$ and $j(T)$ are respectively the largest and smallest ideals of $A$ with zero set $T$, in particular, $j(T) \subseteq k(T)$. The set $T$ is called a \textit{set of synthesis} or \textit{Wiener} (with respect to $A$) if $j(T) = k(T)$. Then, Leptin proves the following theorem in \cite{Lep76c}, which we refer to as \textit{Leptin's theorem}.

\begin{thm}\cite[Satz 5]{Lep76c}\label{thm:lep}
Suppose that $G$ is a $\sigma$-compact locally compact abelian group and $X$ a locally compact $G$-space. Let $A$ be a $*$-subalgebra of $C_0(X)$ satisfying properties (i) through to (iv) as above. If the action of $G$ on $X$ has finitely many orbits, and the closed $G$-invariant subsets of $X$ are sets of synthesis, then the Banach $*$-algebra $L^1(G,A)$ is Hermitian. 
\end{thm}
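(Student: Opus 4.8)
The plan is to induct on the number $n$ of $G$-orbits in $X$, stripping off one closed orbit at each stage and combining a single-orbit base case with an extension argument for Hermitian Banach $*$-algebras. It is convenient to pass throughout to the equivalent notion of a \emph{symmetric} Banach $*$-algebra (via the Shirali--Ford theorem), since the extension argument is most naturally phrased in terms of symmetry.

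First I would set up the functoriality of the assignment $A\mapsto L^1(G,A)$ along $G$-invariant closed subsets. If $Y\subseteq X$ is closed and $G$-invariant, then $k(Y)$ is a closed $G$-invariant $*$-ideal of $A$; the quotient $A/k(Y)$ is, by restriction, a $*$-subalgebra $A|_Y$ of $C_0(Y)$ still satisfying (i)--(iv) relative to the $G$-space $Y$, while $j(Y)$, restricted to the open invariant set $U:=X\setminus Y$, is a $*$-subalgebra $A_U$ of $C_0(U)$ satisfying (i)--(iv) relative to the $G$-space $U$ (regularity is what makes these properties, including the Wiener hypothesis, descend to $U$ and $Y$). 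Since $L^1(G,\,\cdot\,)$ carries a short exact sequence of Banach $*$-algebras to a short exact sequence, and since the Wiener hypothesis gives $j(Y)=k(Y)$, applying it to $0\to k(Y)\to A\to A/k(Y)\to 0$ yields
\[ 0 \longrightarrow L^1(G,A_U) \longrightarrow L^1(G,A) \longrightarrow L^1(G,A|_Y) \longrightarrow 0. \]
Now choose $Y$ to be a minimal nonempty closed $G$-invariant subset; since there are finitely many orbits (and orbits are locally closed by a Baire category argument, using $\sigma$-compactness of $G$) this $Y$ is a single closed orbit, so $U$ carries $n-1$ orbits and $Y$ is one orbit. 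By the inductive hypothesis $L^1(G,A_U)$ is symmetric, and it remains to deal with the single-orbit quotient and with the extension.

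For the base case I would take $X=G/H$ a single orbit, $H\le G$ a closed (hence abelian) subgroup, and $A\subseteq C_0(G/H)$ a translation-invariant regular Banach $*$-algebra with dense compactly supported part, and show $L^1(G,A)$ is symmetric. The idea is to transform in the ``orbit direction'': $L^1(G,C_0(G/H))$ has $*$-representation theory reducing, via the Mackey/imprimitivity analysis for abelian-by-abelian situations, to that of $L^1(H)$, which is symmetric because $H$ is abelian; hence $L^1(G,C_0(G/H))$ is symmetric (when $H$ is trivial this is just the remark that $L^1(G,C_0(G))\cong\mathcal K(L^2(G))$ is a $C^*$-algebra). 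For a proper translation-invariant regular subalgebra $A\subsetneq C_0(G/H)$ one argues that $L^1(G,A)$ embeds in $L^1(G,C_0(G/H))$ as a dense spectral $*$-subalgebra carrying the same structure space — this is precisely where hypothesis (iv) and the description of translation-invariant regular subalgebras of $C_0(G/H)$ are used — so that $L^1(G,A)$ inherits symmetry.

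The remaining, and main, difficulty is the inductive step: passing from ``$I$ is a closed symmetric $*$-ideal of $B$ and $B/I$ is symmetric'' to ``$B$ is symmetric''. This is false for general Banach $*$-algebras — there are non-symmetric locally compact groups that are extensions of symmetric by symmetric groups \cite{LP79,Pog79} — so the Wiener hypothesis must be doing real work here. The point is that hypotheses (i)--(iv) together with the synthesis hypothesis force the primitive ideal space of $L^1(G,A)$ to consist precisely of the primitive ideals lying over the finitely many orbits, with hull-kernel topology controlled by regularity of $A$; synthesis rules out primitive ideals straddling $I$ and $B/I$, and this is exactly the input that makes the extension argument for symmetric Banach $*$-algebras (in the spirit of Leptin's methods in \cite{Lep76c}) applicable to the short exact sequence above. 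I expect the bulk of the work to lie in assembling these facts carefully enough that the extension step is rigorous — in effect, in showing that the Wiener hypothesis precisely excludes the spectral pathology responsible for the failure of symmetry under general extensions.
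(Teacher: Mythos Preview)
The paper does not contain a proof of this theorem. Theorem~\ref{thm:lep} is stated with the citation \cite[Satz 5]{Lep76c}, and the surrounding text says explicitly that ``Leptin proves the following theorem in \cite{Lep76c}''; Section~\ref{sec:3} only translates the statement from German and fixes notation. There is therefore no proof in the present paper against which to compare your proposal.

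For what it is worth, your outline is broadly in the spirit of how arguments in Leptin's series of papers proceed: induction on the number of orbits, peeling off a closed orbit via a short exact sequence of generalised $L^1$-algebras, and an extension result for symmetry. Your identification of the inductive step as the crux, and of the synthesis hypothesis as the device that tames the primitive ideal structure enough to push symmetry through the extension, is the right intuition. But as written the proposal is a strategy sketch, not a proof: the single-orbit base case (reducing $L^1(G,A)$ for $A\subseteq C_0(G/H)$ to symmetry of $L^1(H)$ and then passing to a dense spectral subalgebra) needs substantial justification, and you yourself flag that the extension step --- where the real content lies --- is left unfilled. To reconstruct the actual argument you will need to go to \cite{Lep76c} directly, together with the earlier papers \cite{Lep65,Lep67,Lep76} on which it relies.
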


The following result is then a direct corollary of the theorem. It is not stated explicitly in Leptin's article \cite{Lep76c}, but it is alluded to in the discussion of the article.

\begin{cor}\label{cor:main}
Let $G := N \rtimes H$ be a semi-direct product of locally compact abelian groups with $N$ normal in $G$ and $H$ $\sigma$-compact. If the action $H \acts \widehat{N}$ has finitely many orbits, and the closed $H$-invariant subsets of $\widehat{N}$ are sets of synthesis with respect to $A(\widehat{N})$, then $G$ is Hermitian.
\end{cor}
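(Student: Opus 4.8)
The plan is to deduce the corollary from Proposition~\ref{prop:genl1} together with Leptin's theorem (Theorem~\ref{thm:lep}). By Proposition~\ref{prop:genl1} there is an isometric $*$-isomorphism $L^1(G) \cong L^1(H, A(\widehat{N}))$, and since being Hermitian is a purely spectral condition it is preserved by $*$-isomorphisms; hence it suffices to show that the generalised $L^1$-algebra $L^1(H, A(\widehat{N}))$ is Hermitian. To this end I would apply Theorem~\ref{thm:lep} with the $\sigma$-compact locally compact abelian group $H$ playing the role of the group, the Pontryagin dual $\widehat{N}$ playing the role of the locally compact $G$-space $X$ — equipped with the continuous $H$-action dual to the action $H \acts N$ that defines the semidirect product — and the Fourier algebra $A := A(\widehat{N}) \subseteq C_0(\widehat{N})$ playing the role of the $*$-subalgebra.

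The substance of the proof is then the verification that $A(\widehat{N})$ satisfies properties (i)--(iv) from Section~\ref{sec:3}. Property (i) is the standard fact that $A(\widehat{N})$ is a Banach $*$-algebra under its Fourier-algebra norm. For (ii) and (iii) I would transport the relevant structure through the isometric $*$-isomorphism $A(\widehat{N}) \cong L^1(N)$ of Proposition~\ref{prop:genl1}: under the Fourier transform the translation action of $H$ on $\widehat{N}$ corresponds to the action of $H$ on $N$ by topological automorphisms, and this action on $L^1(N)$ is by isometric $*$-automorphisms and is strongly continuous; hence the translation action on $A(\widehat{N})$ is isometric and strongly continuous, which is exactly (ii) and (iii). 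For (iv), $A(\widehat{N})$ is a regular Banach algebra — this is the classical regularity of the Fourier algebra of a locally compact abelian group — and the functions of compact support in $A(\widehat{N})$ form a dense $*$-subalgebra, which is again a standard property of Fourier algebras.

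With (i)--(iv) established, the two remaining hypotheses of Theorem~\ref{thm:lep} — that the $H$-action on $X = \widehat{N}$ has finitely many orbits, and that every closed $H$-invariant subset of $\widehat{N}$ is a set of synthesis with respect to $A(\widehat{N})$ — are precisely the hypotheses assumed in the corollary. Theorem~\ref{thm:lep} then gives that $L^1(H, A(\widehat{N}))$ is Hermitian, hence $L^1(G)$ is Hermitian, i.e.\ $G$ is Hermitian. I expect the only genuine obstacle to be the bookkeeping in (ii)--(iii): one must check that the translation action $f \mapsto f^h$, $f^h(x) = f(hx)$, that appears in Leptin's setup really does correspond under Fourier transform to the automorphism action of $H$ on $L^1(N)$ underlying Proposition~\ref{prop:genl1}, so that isometry and strong continuity transfer without a mismatch of conventions; the regularity and density statements in (iv) are routine but should be cited precisely.
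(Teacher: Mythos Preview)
Your proposal is correct and follows essentially the same route as the paper: reduce via Proposition~\ref{prop:genl1} to $L^1(H,A(\widehat{N}))$, verify that $A(\widehat{N})$ satisfies (i)--(iv), and invoke Theorem~\ref{thm:lep}. The paper dispatches the verification of (i)--(iv) with a blanket citation to standard Fourier-algebra facts, whereas you spell out the mechanism (transporting (ii)--(iii) through the isomorphism $A(\widehat{N})\cong L^1(N)$) and flag the convention check on the translation versus automorphism action; this extra care is warranted but does not change the argument.
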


\begin{proof}
By Proposition \ref{prop:genl1}, we have an isomorphism $L^1(G) \cong L^1(H,A(\widehat{N}))$, so it suffices to check that $L^1(H,A(\widehat{N}))$ is Hermitian. By standard facts on Fourier algebras (see \cite[Section 2]{KL18}), the Fourier algebra $A(\widehat{N})$ satisfies properties (i) through to (iv) as listed at the start of this section, where we take $X = \widehat{N}$. The corollary then follows by Theorem \ref{thm:lep}. 
\end{proof}


\section{Proof of Theorem A}

We now prove Theorem A from the introduction. Throughout this section we assume that $\mathbb{K}$ is a local field. To prove Theorem A, we just need to show that the action $\mathbb{K}^* \acts \widehat{\mathbb{K}}$ satisfies the assumptions as in Corollary \ref{cor:main}, where $H=\mathbb{K}^*$ and $N=\mathbb{K}$.

To do this, we first remind the reader of the following description of the Pontryagin dual $\widehat{\mathbb{K}}$.

\begin{lem}\cite[Proposition 19, page 234]{Bou19}\label{lem:Bourbaki}
Let $\mathbb{K}$ be a local field, viewed as an abelian group under addition, and let $\chi$ be a non-trivial unitary character of $\mathbb{K}$. Given $y \in \mathbb{K}$, define $\chi_y(x) := \chi(yx)$ for $x \in \mathbb{K}$. Then, the map
\begin{displaymath} \mathbb{K} \rightarrow \widehat{\mathbb{K}}, y \mapsto \chi_y \end{displaymath}
is an isomorphism of locally compact abelian groups.
\end{lem}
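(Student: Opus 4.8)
Write $\phi\colon\mathbb{K}\to\widehat{\mathbb{K}}$ for the map $y\mapsto\chi_y$. The plan is to verify the three things that make $\phi$ an isomorphism of locally compact abelian groups: that it is a continuous homomorphism, that it is injective, and that it is a homeomorphism onto a subgroup which is in fact all of $\widehat{\mathbb{K}}$. The homomorphism property is immediate from bi-additivity of the pairing $(y,x)\mapsto\chi(yx)$: since $\chi((y_1+y_2)x)=\chi(y_1x)\chi(y_2x)$ we get $\chi_{y_1+y_2}=\chi_{y_1}\chi_{y_2}$. Continuity of $\phi$ into $\widehat{\mathbb{K}}$, with its compact-open topology, follows because multiplication $\mathbb{K}\times\mathbb{K}\to\mathbb{K}$ and $\chi$ are continuous, so $(y,x)\mapsto\chi(yx)$ is jointly continuous, and $\mathbb{K}$ is locally compact.

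Next I would deduce injectivity and density of the image, which are really the same statement viewed through Pontryagin duality, and both rest on the field structure of $\mathbb{K}$. If $\chi_y$ is trivial then $\chi(yx)=1$ for all $x\in\mathbb{K}$; were $y\neq 0$, the map $x\mapsto yx$ would be a bijection of $\mathbb{K}$, forcing $\chi$ to be trivial, contrary to hypothesis, so $y=0$. Dually, identifying $\widehat{\widehat{\mathbb{K}}}\cong\mathbb{K}$ canonically, the annihilator of $\phi(\mathbb{K})\le\widehat{\mathbb{K}}$ inside $\widehat{\widehat{\mathbb{K}}}$ is $\{x\in\mathbb{K}:\chi(xy)=1\text{ for all }y\in\mathbb{K}\}$, which by the same argument is $\{0\}$; hence $\phi(\mathbb{K})$ is dense in $\widehat{\mathbb{K}}$.

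The remaining point — that the injective, continuous, dense-range homomorphism $\phi$ is actually surjective and open — is the real obstacle, since injectivity together with dense range is far from sufficient in general; here it is precisely the local nature of $\mathbb{K}$ that makes it work. In the non-archimedean case, let $\mathcal{O}\subseteq\mathbb{K}$ be the ring of integers and $\mathfrak{p}$ its maximal ideal, and pick the integer $n$ with $\chi$ trivial on $\mathfrak{p}^{n}$ but not on $\mathfrak{p}^{n-1}$. Then $\phi^{-1}(\mathcal{O}^{\perp})=\{y:\chi\text{ is trivial on }y\mathcal{O}\}=\mathfrak{p}^{n}$, a compact open subgroup, while $\mathcal{O}^{\perp}$ is compact and open in $\widehat{\mathbb{K}}$; a further, relative, application of the duality argument above (if a nonzero $x+\mathcal{O}\in\mathbb{K}/\mathcal{O}\cong\widehat{\mathcal{O}^{\perp}}$ killed $\phi(\mathfrak{p}^{n})$ then $\chi$ would be trivial on $x\mathfrak{p}^{n}$, forcing $x\in\mathcal{O}$) shows that the compact (hence closed) subgroup $\phi(\mathfrak{p}^{n})$ is not proper in $\mathcal{O}^{\perp}$, so $\phi|_{\mathfrak{p}^{n}}\colon\mathfrak{p}^{n}\to\mathcal{O}^{\perp}$ is a continuous bijection of compact groups and hence an isomorphism. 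Being a homomorphism that is a homeomorphism on a neighbourhood of $0$, $\phi$ is then open onto its image, so that image is an open, hence closed, hence dense, hence full subgroup of $\widehat{\mathbb{K}}$, and $\phi$ is an isomorphism. In the archimedean cases $\mathbb{K}\in\{\mathbb{R},\mathbb{C}\}$ one appeals instead to the classical self-duality of $\mathbb{R}^{n}$ — continuous characters lift through the exponential to linear maps into $\mathbb{R}$ — and checks that the compact-open topology matches the usual one. (Alternatively, once surjectivity of $\phi$ has been obtained by any of these routes, openness is automatic by the open mapping theorem for topological groups, since $\mathbb{K}$ is $\sigma$-compact and $\widehat{\mathbb{K}}$ is locally compact.)
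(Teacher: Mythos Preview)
Your argument is correct. The paper itself does not prove this lemma at all: it is stated with a bare citation to Bourbaki and then used. So there is no ``paper's proof'' to compare against; you have supplied what the paper omits.

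On the substance: the algebraic steps (homomorphism, injectivity, dense image via the annihilator in $\widehat{\widehat{\mathbb{K}}}\cong\mathbb{K}$) are standard and handled cleanly. The genuinely non-formal part is upgrading a continuous injection with dense image to a topological isomorphism, and your non-archimedean argument is the right one: the existence of the conductor $n$ follows since continuity of $\chi$ forces triviality on some $\mathfrak{p}^{N}$ while non-triviality of $\chi$ together with $\mathbb{K}=\bigcup_{m}\mathfrak{p}^{-m}$ gives non-triviality on some $\mathfrak{p}^{M}$; your computation $\phi^{-1}(\mathcal{O}^{\perp})=\mathfrak{p}^{n}$ and the relative annihilator check showing $\phi(\mathfrak{p}^{n})=\mathcal{O}^{\perp}$ are both correct, and the passage from ``homeomorphism on a compact open neighbourhood of $0$'' to ``open, hence surjective'' is valid. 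The archimedean case is only sketched, but the reduction to the classical description of $\widehat{\mathbb{R}^{n}}$ is the standard move and suffices (for $\mathbb{K}=\mathbb{C}$ one should note that any non-trivial character has the form $z\mapsto e^{2\pi i\,\mathrm{Re}(\alpha z)}$ for some $\alpha\neq 0$, so $y\mapsto\chi_{y}$ corresponds to $y\mapsto\alpha y$, which is a homeomorphism). Your parenthetical alternative via the open mapping theorem is also a legitimate shortcut once surjectivity is known.
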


One then uses this lemma to compute that for $x,y \in \mathbb{K}$ and $z \in \mathbb{K}^*$,
\begin{displaymath} z \cdot \chi_y(x) = \chi_y(zx) = \chi(yzx) = \chi_{zy}(x).  \end{displaymath}

In particular, it follows that under the identification $\mathbb{K} \cong \widehat{\mathbb{K}}$ as in Lemma \ref{lem:Bourbaki}, the action of $\mathbb{K}^*$ on $\widehat{\mathbb{K}}$ is identified with the canonical multiplicative action of $\mathbb{K}^*$ on $\mathbb{K}$. Thus, we will work with the action $\mathbb{K}^* \acts \mathbb{K}$ instead for the remainder of the proof.

Then, one computes easily that there are two orbits of the action $\mathbb{K}^* \acts \mathbb{K}$: $\{0\}$ and $\mathbb{K} \setminus \{0\}$. Furthermore, the closed $\mathbb{K}^*$-invariant subsets of $\mathbb{K}$ are $\{0\}$ and $\mathbb{K}$. To complete the proof, we just need to show that the sets $\{0\}$ and $\mathbb{K}$ are sets of synthesis with respect to the algebra $A(\mathbb{K})$.

Clearly $j(\mathbb{K}) = k(\mathbb{K})$ since both $j(\mathbb{K})$ and $k(\mathbb{K})$ contain only the zero function. To complete the proof we just need to show that $j(\{0\}) \supseteq k(\{0\})$ (and hence $j(\{0\}) = k(\{0\})$). Suppose that $f \in k(\{0\}) \subseteq A(\mathbb{K})$. We must show that $f \in j(\{0\})$. For each $n \in \mathbb{N}$, by \cite[Lemma 2.3.7]{KL18}, there exists a function $h_n \in A(\mathbb{K}) \cap C_c(G)$ satisfying the following:
\begin{enumerate}[(i)]
   \item $h_n$ vanishes in a neighbourhood of $0 \in \mathbb{K}$;\
   \item $\norm{h_n -f}_{A(G)} \le 1/n$.
\end{enumerate}
In particular, it follows that $h_n \in j(\{0\})$ for each $n \in \mathbb{N}$ and $h_n$ converges to $f$ in $A(\mathbb{K})$ as $n \rightarrow \infty$. Thus $f \in j(\{0\})$ by definition of $j(\{0\})$. This completes the proof.


\section{The case of a number field}

For this section, we now let $\mathbb{K}$ be a number field. In particular, this field has the discrete topology. As mentioned in the introduction, since $\mathbb{K} \rtimes \mathbb{K}^*$ is discrete and contains free subsemigroups on two or more generators, it follows by the results of \cite{Jen70} that this semi-direct product is not a Hermitian group. 

We now show that the action $\mathbb{K}^* \acts \widehat{\mathbb{K}}$ does not have finitely many orbits, but the closed $\mathbb{K}^*$-invariant subsets of $\widehat{\mathbb{K}}$ are sets of synthesis.  

Let $\mathbb{A}_\mathbb{K}$ denote the ring of adeles over the number field $\mathbb{K}$. It is well known that we have an isomorphism $\widehat{\mathbb{K}} \cong \mathbb{A}_\mathbb{K}/\mathbb{K}$ \cite[Proposition 7.15]{RV99}. Furthermore, $\widehat{\mathbb{K}}$ is uncountable since it is a quotient of an uncountable group by a countable group, and $\widehat{\mathbb{K}}$ is compact since $\mathbb{K}$ is discrete \cite[Theorem 23.17]{HR79}.

It is a known result in harmonic analysis that every closed subset of a compact abelian group is a set of synthesis \cite[Example 39.10(b)]{HR70}. In particular, the closed $\mathbb{K}^*$-invariant subsets of $\widehat{\mathbb{K}}$ are sets of synthesis. However, since $\mathbb{K}^*$ is a countable group, the orbits of the action $\mathbb{K}^* \acts \widehat{\mathbb{K}}$ are countable, hence there must be uncountably many orbits for this action since $\widehat{\mathbb{K}} \cong \mathbb{A}_\mathbb{K}/\mathbb{K}$ is uncountable.

This shows that one cannot remove the assumption of having finitely many orbits from Corollary \ref{cor:main} and still be guaranteed to get a Hermitian group. 


\section{A remark}

Let $\mathbb{Z}$ act on $\mathbb{Q}_p$ by multiplication by $p$ and form the corresponding semi-direct product $\mathbb{Q}_p \rtimes \mathbb{Z}$. It is posed as an open question in \cite[Section 3.6]{Pal15} and a recent article by the author \cite[Section 8.5]{Car25}, to determine whether the group $\mathbb{Q}_p \rtimes \mathbb{Z}$, and certain generalisations of this group, are Hermitian. 

By a similar argument to the last section, it can be checked that the induced action of $\mathbb{Z}$ on $\widehat{\mathbb{Q}_p}$ has infinitely many orbits. The same is the case for the various generalisations of this group stated in \cite{Pal15} and \cite{Car25}. In particular, the example of the last section shows that we cannot expect to be able to generalise Corollary \ref{cor:main} to prove Hermitianness of the group $\mathbb{Q}_p \rtimes \mathbb{Z}$ and its generalisations. New results will need to be used/developed to understand the Hermitian property of these semi-direct products.

If these groups are proved to be Hermitian, then this will provide further examples of non-discrete totally disconnected locally compact Hermitian groups with exponential growth.


\bibliographystyle{amsplain}
\bibliography{aff_her}


\end{document}